\newcommand{\rr}{\mathbb R}
\newcommand{\zz}{\mathbb Z}  
\newcommand{\A}{\mathcal A}
\newcommand{\al}{\alpha}
\newcommand{\ep}{\varepsilon}
\newcommand{\dl}{\delta}
\newcommand{\sg}{\sigma}
\newcommand{\usg}{\upsigma}
\newcommand{\Lmd}{\Lambda}
\newcommand{\ey}{\frac{1}{2}}
\newcommand{\bn}{\mathbf n}
\newcommand{\xd}{\dot{x}}
\theoremstyle{plain}
\newtheorem{thm}{Theorem}[section]
\newtheorem{prop}{Proposition}[section]
\newtheorem{lem}{Lemma}[section]
\theoremstyle{definition}
\newtheorem{dfn}{Definition}[section]
\theoremstyle{remark}
\newtheorem{rem}{Remark}[section]
\begin{document}

\title[regularizable collinear periodic solutions]{regularizable collinear periodic solutions in the $n$-body problem with arbitrary masses}
 

\author{Guowei Yu}
\address{Chern Institute of Mathematics and LPMC, Nankai University, Tianjin, China}
\email{yugw@nankai.edu.cn}

\thanks{This work is supported by the National Key R\&D Program of China (2020YFA0713303), NSFC (No. 12171253), the Fundamental Research Funds for the Central Universities and Nankai Zhide Fundation.}

\begin{abstract}
For $n$-body problem with arbitrary positive masses, we prove there are regularizable collinear periodic solutions for any ordering of the masses, going from a simultaneous binary collision to another in half of a period with half of the masses moving monotonically to the right and the other half monotonically to the left. When the masses satisfy certain equality condition, the solutions have extra symmetry. This also gives a new proof of the existence of Schubart orbit, when $n=3$. 
\end{abstract}

\maketitle

\section{Introduction} 

The first solutions ever found in the three body problem were the homothetic Euler solutions discovered by Euler \cite{Euler} in 1767, where the configuration is collinear and the masses expands and shrink homothetically. Later similar collinear homothetic solutions were found by Moulton \cite{Moulton1910} in 1910 for any $n \ge 3$.



Since collisions with more than two bodies in general are not regularizable, see \cite{MG74} (except when $n$=3 with some special choices of masses, see \cite{Sie41} and \cite{Simo80}), there is no meaningful way to continue these homothetic solutions before or after the total collisions. Meanwhile simultaneous binary collisions\footnote{A simultaneous binary collision is a collision containing only binary collisions. For simplicity, the same name will be used, even if it only contains a single binary collision.} are known to be regularizable (intuitively solutions are extended through these collisions by elastic bounces, see \cite{LC20}, \cite{SL92}, \cite{EB93} and \cite{EB96}), so we may ask if there are collinear solutions going from one simultaneous binary collision to another, and if they can be extended through these collisions to obtain regularizable periodic solutions. 

The first solution of this type was found numerically by Schubart \cite{Sch1956} in 1956 for $n=3$ with two equal masses(see Remark \ref{rem; Schubart} and Figure \ref{fig:schubart}), and later proven by Moeckel \cite{Moeckel08} and Venturelli \cite{Ve08} using different methods(for cases with unequal masses, see \cite{Sh11} and \cite{Yan12}). In this paper, we will prove the following theorem which shows the existence of such solutions for any $n \ge 3$ with arbitrary choices of masses and ordering (see Figure \ref{fig:orbit} for an illuminating picture). 

\begin{thm}
\label{thm:MainThm} Let $\mathcal{S}_{\bn}$ be the permutation group of the index set $\bn = \{1, 2, \dots, n\}$ with $n \ge 3$. For any choice of positive masses, time $T>0$ and permutation $\sigma \in \mathcal{S}_{\bn}$, there is a regularizable collinear $2T$-periodic solution $x(t) = (x_i(t))_{i=1}^n \in \rr^{n}$ of the $n$-body problem with following properties.
\begin{enumerate}
\item[(a).] For any $t \in \rr$, $x(t) = x(t + 2T) = x(2T-t) \in \rr^n$. 
\item[(b).] For any $t \in (0, T)$, $x(t)$ is collision-free with $\xd_{\sg(i)}(t) >0$ for odd $i$, $\xd_{\sg(i)}(t) < 0$ for even $i$. 
\item[(c).] When $n$ is odd, $x(0)$ and $x(T)$ each contain $\ey (n-1)$ pairs of binary collision with
$$ \begin{aligned}
x_{\sg(1)}(0) < x_{\sg(2)}(0) & = x_{\sg(3)}(0) < \cdots < x_{\sg(2j)}(0) \\
& = x_{\sg(2j +1)}(0) < \cdots < x_{\sg(n-1)}(0) = x_{\sg(n)}(0);
\end{aligned}$$
$$ \begin{aligned}
x_{\sg(1)}(T) &= x_{\sg(2)}(T) < \cdots < x_{\sg(2j-1)}(T) \\
& = x_{\sg(2j)}(T) < \cdots < x_{\sg(n-2)}(T) = x_{\sg(n-1)}(T) < x_{\sg(n)}(T).
\end{aligned} $$
\item[(d).] When $n$ is even, $x(0)$  contains $\ey (n-2)$ pairs of binary collision and $x(T)$ contains $\ey n$ pairs of binary collision with
$$ \begin{aligned}
x_{\sg(1)}(0) & < x_{\sg(2)}(0) = x_{\sg(3)}(0) < \cdots < x_{\sg(2j)}(0) = x_{\sg(2j+1)}(0) \\
& < \cdots< x_{\sg(n-2)}(0)= x_{\sg(n-1)}(0) < x_{\sg(n)}(0);
\end{aligned}$$ 
$$ x_{\sg(1)}(T) = x_{\sg(2)} < \cdots < x_{\sg(2j-1)}(T) = x_{\sg(2j)}(T)< \cdots < x_{\sg(n-1)}(T) = x_{\sg(n)}(T). $$ 
\item[(e).]  $\xd_{\sg(1)}(0) = \xd_{\sg(n)}(T) =0$, when $n$ is odd, $\xd_{\sg(1)}(0) = \xd_{\sg(n)}(0) =0$, when $n$ is even. Moreover
$$ \lim_{t \to 0} \frac{m_{\sg(2j)} \xd_{\sg(2j)}(t) + m_{\sg(2j+1)} \xd_{\sg(2j+1)}(t)}{m_{\sg(2j)}+ m_{\sg(2j+1)}} = 0, \; \forall 1 \le j \le [(n-1)/2]; $$
$$ \lim_{t \to T} \frac{m_{\sg(2j-1)}\xd_{\sg(2j-1)}(t) + m_{\sg(2j)} \xd_{\sg(2j)}(t)}{m_{\sg(2j-1)} + m_{\sg(2j)}} =0, \; \forall 1 \le j \le [n/2]. $$ 

\item[(f).] $x(t)$ is $C^1$-block regularizable in the collinear $n$-body problem, and $C^0$-block regularizable in the planar or spatial $n$-body problem. 
\end{enumerate}
\end{thm}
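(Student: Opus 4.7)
The plan is to construct $x|_{[0,T]}$ as the minimizer of the Lagrangian action on an ordered path space, then extend $x$ to $\rr$ by the reflection symmetries $x(-t) = x(t)$ and $x(2T-t) = x(t)$, and invoke classical regularization results for simultaneous binary collisions at $t \in \{0,T\}$ to obtain the $2T$-periodic solution together with statement (f).

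After relabeling one may take $\sigma = \mathrm{id}$. Let $\Lambda \subset H^{1}([0,T], \rr^{n})$ be the class of paths with fixed center of mass, satisfying
$$x_{1}(t) \le x_{2}(t) \le \cdots \le x_{n}(t) \quad \text{for all } t \in [0,T],$$
and carrying the equality pattern at $t = 0, T$ prescribed by (c) (for $n$ odd) or (d) (for $n$ even). The Lagrangian action
$$\mathcal{A}(x) = \int_{0}^{T} \Bigl( \tfrac{1}{2} \sum_{i=1}^{n} m_{i} \dot{x}_{i}^{2} + \sum_{i<j} \frac{m_{i} m_{j}}{x_{j} - x_{i}} \Bigr) \, dt$$
is coercive on $\Lambda$ (Poincar\'e plus the endpoint constraints), weakly lower-semicontinuous (the Newtonian potential is non-negative and lsc), and $\Lambda$ is weakly closed since the ordering constraints are convex. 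A minimizer $x_{\ast}$ therefore exists, and finiteness of the infimum is obtained by comparison with an explicit competitor built from Sundman-type radial free-fall arcs near each of the prescribed pair-collisions.

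On any open subinterval where $x_{\ast}$ is collision-free it solves Newton's equations. Finiteness of $\mathcal{A}(x_{\ast})$ rules out interior collisions of multiplicity $\ge 3$ via the standard Sundman lower bound, so the only remaining possibility is an interior binary collision between adjacent labels. Once these are excluded, the monotonicity (b) follows from conservation of energy and the endpoint structure (any interior turning point would be inconsistent with the prescribed configuration at the other endpoint), and the vanishing-momentum conditions in (e) emerge as the natural boundary conditions of the constrained minimization. Statement (f) is exactly the content of the Levi-Civita / Aarseth--Zare regularization of each prescribed simultaneous binary collision at $t = 0$ and $t = T$; the associated $C^{1}$-smooth (resp. $C^{0}$-smooth in the planar/spatial case) continuation supplies the extension needed to define the $2T$-periodic solution through the two reflections.

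The main obstacle, and the step that will absorb most of the technical work, is excluding interior binary collisions of $x_{\ast}$ between adjacent labels $\sigma(i), \sigma(i+1)$ at some $t_{0} \in (0,T)$. Marchal's averaging principle, which rules these out in dimension $\ge 2$, relies on a transverse perturbation that is unavailable in the collinear setting, so a one-dimensional substitute is required. My plan is a local comparison in Levi-Civita coordinates around the pair $(\sigma(i), \sigma(i+1))$: minimality of $x_{\ast}$ under the order constraint forces vanishing of the relevant Lagrange multiplier and reduces the picture near $t_{0}$ to a smooth one-dimensional variational problem in the regularized coordinate, in which the attractive pull of a neighbouring body $\sigma(i-1)$ or $\sigma(i+2)$ can be exploited to exhibit an admissible variation whose action derivative at $x_{\ast}$ is strictly negative. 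Once this interior-collision exclusion is in hand, the remaining pieces---existence, monotonicity, boundary momentum, regularization, and the two reflections---assemble into a proof of Theorem~\ref{thm:MainThm}.
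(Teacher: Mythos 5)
Your outline gets the general framework right (order-constrained minimization on $H^1([0,T],\rr^n)$ with the collision pattern imposed as endpoint equalities, reflection extension, ElBialy-type block regularization at $t=0,T$), but the step you yourself identify as the main obstacle --- excluding interior binary collisions --- is left as a plan rather than an argument, and the plan points in a direction that is unlikely to work. A local deformation in Levi--Civita coordinates near a collinear binary collision is exactly the kind of argument that has historically failed in the collinear problem (as you note, there is no transverse direction to perturb into), and you produce no explicit variation, no sign computation of the action derivative, and no reason the "pull of a neighbouring body" wins against the kinetic cost. Separately, your derivation of the monotonicity (b) is not a proof: "any interior turning point would be inconsistent with the prescribed configuration at the other endpoint" is false as stated, since a body can oscillate (increase, decrease, increase) and still reach the prescribed ordered configuration at $T$; energy conservation does not forbid this.

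The paper resolves both issues at once by inverting your logical order. It first proves that the minimizer has each odd-indexed coordinate \emph{strictly} increasing and each even-indexed coordinate strictly decreasing on $[0,T]$, using two elementary global comparison paths: if an odd-indexed $x_{i_0}$ decreases on some $[t_1,t_2]$, reflect that arc about $x_{i_0}(t_1)$ and translate every body above $i_0$ upward by twice the drop --- this preserves the kinetic energy exactly and strictly decreases the potential, contradicting minimality; if $x_{i_0}$ is constant on $[t_1,t_2]$, stretch it by $\ep$ and shift the bodies above by $\ep$ --- the potential between $x_1$ and $x_n$ drops at order $\ep$ while the kinetic cost is order $\ep^2$. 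Strict monotonicity then rules out \emph{every} interior collision with no further work: two adjacent bodies that must remain ordered but move strictly in opposite directions cannot meet in $(0,T)$. This is the key idea your proposal is missing, and it is what makes the proof elementary; without it (or a genuine substitute for the interior-collision exclusion), your argument does not close. The remaining discrepancies are minor: the paper achieves coercivity via the anchoring condition $x_1(0)x_n(0)\le 0$ rather than by fixing the center of mass (it then shows $v_0=0$ a posteriori by a comparison), and property (f) rests on ElBialy's block-regularization theorems, with the matching of the limiting collision directions and intrinsic pair energies supplied by the reflection symmetry $x(t)=x(-t)$, much as you suggest.
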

\begin{rem}
For the definition of block regularization, see Definition \ref{dfn;sing-block-reg} and \ref{dfn;solution-blok-reg}.  
\end{rem}

\begin{figure} 
\includegraphics[width=\textwidth]{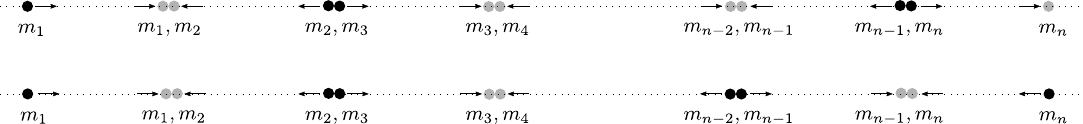}
\caption{ above $n$ is odd, below $n$ is even; $\sigma = \text{Id}$; black, $t=0$; gray, $t=T$.}
\label{fig:orbit}
\end{figure}

When the masses satisfy the following condition, we obtain solutions with extra symmetry.
\begin{equation}
\label{eq;mass-condition} m_{\sg(i)} = m_{\sg(n +1 -i)}, \; \; \forall i \in \bn.
\end{equation}

\begin{thm}
\label{thm:extra-symmetry}  If condition \eqref{eq;mass-condition} holds, for any $T>0$ and $\sg \in \mathcal{S}_{\bn}$, there is a collinear $2T$-periodic solution $x(t)=(x_i(t))_{i \in \bn} \in \rr^n$ satisfying properties given in Theorem \ref{thm:MainThm}, and for any $i \in \bn$,
\begin{enumerate}
\item[(g).] $x_{\sg(i)}(t) = -x_{\sg(n+1 -i)}(T-t)$, when $n$ is odd, $x_{\sg(i)}(t) = -x_{\sg(n+1-i)}(t)$, when $n$ is even. 
\end{enumerate}
\end{thm}
\begin{rem} \label{rem; Schubart}
When $n=3$ and $m_1 =m_3$, Theorem \ref{thm:extra-symmetry} gives an alternative proof of the Schubart orbit, as the corresponding solution going from a binary collision between $m_1$ and $m_2$ to a binary collision between $m_2$ and $m_3$ during half a period with the masses forming an Euler configuration at a quarter of a period ($x_1(T/2) = - x_3(T/2), x_2(T/2) =0$, see Figure \ref{fig:schubart}). 
\end{rem}

\begin{figure}
\centering
\includegraphics[width=\textwidth]{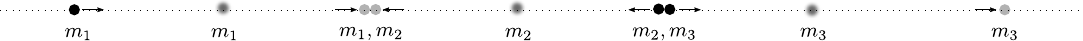}
\caption{Schubart orbit with $m_1=m_3$; black, t=0; blurred, t=T/2; gray, t=T.}
\label{fig:schubart}
\end{figure}

By Newton's universal gravitational law, the trajectories $x(t) =(x_i(t))_{i \in \bn} \in \rr^{nd}$ of $n$ point masses, $m_i>0$, $i \in \bn$, moving in $\rr^d$($d \ge 1$), satisfy the following equation
\begin{equation}
m_i \ddot{x}_i = - \sum_{j  \in \bn \setminus \{i\}} \frac{m_i m_j(x_i -x_j)}{|x_i -x_j|^3}, \;\; i \in \bn. \label{eq:n-body}
\end{equation} 
When $d=1$, it is called the collinear $n$-body problem, as all the masses are always moving in a straight line. Similarly when $d=2$ or $3$, it is called the planar or spatial $n$-body problem. Since the collinear problem is an invariant sub-system of the higher dimension problem, knowledge of it will be crucial in understanding the general problem, as shown by Moeckel in \cite{Mk07}.




Our proof is variational in nature. It is well-known that solutions of \eqref{eq:n-body} can be found as critical points of the following action functional
\begin{equation}
\label{eq;Action} \A(x; t_1, t_2) = \int_{t_1}^{t_2} L(x, \xd) \,dt = \int_{t_1}^{t_2} K(\xd) + U(x) \,dt,
\end{equation}
where 
$$ K(\xd) =  \ey \sum_{i \in \bn}  m_i |\xd_i|^2, \;\; U(x) = \sum_{ \{ i < j \} \subset \bn} \frac{m_i m_j}{|x_i - x_j|}.$$
For a long time very few results were obtained in the Newtonian $n$-body problem using variational methods, because of singularities due to collision. We denote the set of \emph{collision configurations} as
$$ \Delta = \{x = (x_i)_{i=1}^n \in \rr^{nd}: x_i = x_j, \; \text{ for some } i \ne j \}. $$

The breakthrough is the proof of the famous Figure-Eight solution in the equal mass three body problem by Chenciner and Montgomery \cite{CM00}. After that many new periodic solutions have been found and proved using action minimization methods. The main difficulty is always to show the corresponding minimizers are collision-free. It is impossible to give a complete list of the literature, we refer the readers to \cite{FT04}, \cite{Ch08},  \cite{Yu17} and the references within.

For almost all the periodic solutions found using minimization method, the masses need to satisfy various equalities required by the symmetric constraints. In particular some of the masses, if not all of them, need to be equal, for details see Section \ref{sec;extra-sym}. Up to our knowledge, the only results that hold for unequal masses are \cite{Ch08}, \cite{Chen12} and \cite{Chen21} (however the masses still need to satisfy certain conditions, which do not hold for any choice of masses). From this point of view, Theorem \ref{thm:MainThm} is the first result that establishes the existence of periodic solutions for any $n$ and any choice of masses using action minimization methods. 

The standard approaches to rule out collisions in action minimizers are \emph{local deformation}, \emph{Marchal's average method} and \emph{level estimate}. Here we use none of them, but just the monotonicity of the solution, so the proof is simple and elementary. 

As the solutions we found are collinear, they can not always be collision-free. However since they are block-regularizable, they can still be used to explain the behavior of the nearby solutions and understand the global dynamics of the problem. Compare to collision-free periodic solutions, so far there are much less results about regularizable periodic solutions based on action minimization methods. Besides \cite{Ve08}, \cite{Sh11} and \cite{Yan12} mentioned above, up to our knowledge there is also \cite{MVV13}.   




\section{Proof of Theorem \ref{thm:MainThm}}
Since the proofs for any $\sg \in \mathcal{S}_{\bn}$ are the same, we will only give the details for $\sigma =\text{Identity}$. 

To find the desired minimizer, consider the following set of admissible paths.
\begin{dfn} \label{dfn;Gamma}
 $\Gamma= \{ x \in H^1([0, T], \rr^n): x \text{ satisfies the following conditions} \}.$
\begin{enumerate}
\item[(i).] $x_1(0)x_n(0) \le 0$;
\item[(ii).] for any $t \in [0, T]$, $x_1(t) \le x_2(t) \le   \cdots \le x_{n-1}(t) \le x_n(t)$; 
\item[(iii).] when $n$ is odd, 
$$ x_1(0) \le x_2(0) = x_3(0)\le \cdots \le x_{2j}(0) = x_{2j+1}(0) \le \cdots \le x_{n-1}(0) = x_n(0), $$
$$x_1(T) = x_2(T) \le \cdots \le x_{2j-1}(T) = x_{2j}(T) \le \cdots \le x_{n-2}(T) = x_{n-1}(T) \le x_n(T); $$
\item[(iv).] when $n$ is even, 
$$  \begin{aligned}
 x_1(0) \le x_2(0) = x_3(0) & \le \cdots \le x_{2j}(0) \\
 & = x_{2j+1}(0) \le \cdots \le x_{n-2}(0) = x_{n-1}(0) \le x_n(0), 
\end{aligned}
$$
$$ x_1(T) = x_2(T) \le \cdots \le x_{2j-1}(T) = x_{2j}(T) \le \cdots \le x_{n-1}(T) = x_{n}(T). $$
\end{enumerate}
\end{dfn}

For any $x \in H^1([0, T], \rr^n)$, set $\A_T(x) = \A(x; 0, T)$.
\begin{lem} \label{lem:existence} 
There is an $x \in \Gamma$ with $\A_T(x) = \inf \{ \A_T(y): y \in \Gamma \} < \infty$. 
\end{lem}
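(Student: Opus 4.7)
The plan is to apply the direct method of the calculus of variations to $\A_T$ on $\Gamma$: take a minimizing sequence $(x^k) \subset \Gamma$, show it is bounded in $H^1$, extract a weakly convergent subsequence, and verify that the limit lies in $\Gamma$ and attains the infimum by lower semicontinuity.

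To see $\inf_\Gamma \A_T < \infty$, I would construct a test path in $\Gamma$ in which every pair scheduled to collide at $t=0$ (resp.\ at $t=T$) separates like $c\,t^{1/2}$ (resp.\ $c\,(T-t)^{1/2}$) from the corresponding collision. Such a path has $\dot x \in L^2$ and potential of order $t^{-1/2}+(T-t)^{-1/2}$ near the endpoints, hence finite action.

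For compactness, the action bound $\A_T(x^k) \le C$ together with $U \ge 0$ gives $\int_0^T K(\dot x^k)\,dt \le C$, hence $\|\dot x^k_i\|_{L^2} \le C_i$ and the H\"older estimate $|x^k_i(t) - x^k_i(s)| \le C_i \sqrt{|t-s|}$. To convert this into an $L^\infty$ bound I would exploit the interlocking of the collision data in (iii)--(iv): every consecutive pair $(i, i+1)$ with $i \in \{1, \dots, n-1\}$ is in binary collision at exactly one of $t=0$ or $t=T$. When the pair collides at $t=T$, the identity
\begin{equation*}
x^k_{i+1}(0) - x^k_i(0) = [x^k_{i+1}(0) - x^k_{i+1}(T)] - [x^k_i(0) - x^k_i(T)]
\end{equation*}
together with the H\"older bound gives $|x^k_{i+1}(0) - x^k_i(0)| \le (C_i + C_{i+1})\sqrt{T}$; when it collides at $t=0$ this difference is $0$. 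Summing over $i$ bounds the diameter $x^k_n(0) - x^k_1(0)$, and combining with condition (i) ($x^k_1(0) \le 0 \le x^k_n(0)$) gives a uniform bound on each $x^k_i(0)$, hence on $\|x^k\|_{L^\infty}$ via the H\"older estimate above.

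Finally, extract a subsequence with $x^k \rightharpoonup x$ in $H^1$ and $x^k \to x$ in $C^0$. The closed constraints defining $\Gamma$ pass to the uniform limit, so $x \in \Gamma$. The kinetic functional is weakly lower semicontinuous (convex in $\dot x$); $U$ is nonnegative and lower semicontinuous on $\rr^n$ (continuous off $\Delta$, $+\infty$ on $\Delta$), so Fatou yields $\int U(x)\,dt \le \liminf \int U(x^k)\,dt$. Thus $\A_T(x) \le \liminf \A_T(x^k) = \inf_\Gamma \A_T$, and $x$ is the desired minimizer. The main obstacle is the diameter bound: $\A_T$ is translation-invariant and (i) alone does not bound the diameter of the configuration at $t=0$, so the interlocking of the endpoint collision pairs is what converts the $L^2$ bound on $\dot x$ into a pointwise bound on $x$.
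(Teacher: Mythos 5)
Your overall strategy is the direct method, which is also what the paper uses, and your key coercivity estimate is a valid repackaging of the paper's. The paper converts the $L^2$ bound on $\dot x$ into a bound on $x$ by concatenating $x_1$, the time-reversal of $x_2$, $x_3$, and so on, into a single continuous scalar path $z$ on $[0,nT]$ --- continuity at the junctions is exactly the endpoint collision conditions (iii)--(iv) --- and then using condition (i) to produce a zero of $z$, followed by a Poincar\'e-type inequality. Your telescoping identity over the interlocked collision pairs extracts the same information without introducing $z$, and is if anything more transparent; both arguments rest on precisely the observation you isolate at the end, namely that (i) alone does not break translation invariance and the endpoint collisions are what chain the components together. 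A minor difference of packaging: the paper states the conclusion as coercivity of $\A_T$ on $\Gamma$ (an inequality of the form $\|x\|_{H^1}^2 \le C\,\A_T(x)$) and then invokes the abstract existence theorem for a weakly lower semicontinuous coercive functional on a weakly closed set, whereas you run the minimizing sequence by hand; these are interchangeable.

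There is one genuine error, in the finiteness step. A test path in which a colliding pair separates like $c\,t^{1/2}$ does \emph{not} have $\dot x \in L^2$: the relative velocity is of order $t^{-1/2}$, so the kinetic integrand is of order $t^{-1}$ and the kinetic part of the action diverges logarithmically near the collision. (The paper dismisses finiteness with ``it is not hard to see,'' so it offers no construction to compare against.) The fix is immediate: let the separation be of order $t^{\alpha}$ for any $\alpha \in (1/2,1)$, e.g.\ $\alpha = 2/3$, for which the kinetic integrand is $O(t^{2\alpha-2})$ and the potential is $O(t^{-\alpha})$, both integrable. With that replacement your argument is complete.
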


\begin{proof}
It is not hard to see the infimum of $\A_T$ in $\Gamma$ must be finite. 

Since $\Gamma$ is weakly closed in $H^1([0, T], \rr^n)$ and $\A_T$ is weakly lower semi-continuous, by a standard result in calculus of variation, it is enough to show $\A_T(x) \to \infty$, whenever $\| x|_{[0, T]} \|_{H^1} \to \infty$. 

By Definition \ref{dfn;Gamma}, for any $x \in \Gamma$ we can define a new path $z \in H^1([0, nT], \rr^n)$ as
\begin{equation*}
z(t) = \begin{cases}
x_{2j-1}(t-2jT + 2T), &  \text{ if } t \in [(2j-2)T, (2j-1)T], \; \forall j =1, 2, \dots, [\frac{n+1}{2}] ; \\
x_{2j}(2jT-t), &   \text{ if } t \in [(2j-1)T, 2jT], \; \forall j = 1, 2, \dots, [\frac{n}{2}]. 
\end{cases}
\end{equation*}
Notice that $\|x|_{[0, T]}\|_{H^1} =\|z|_{[0, nT]}\|_{H^1}$, as 
$$ \int_0^{nT} |z|^2 \,dt = \int_0^{T} \sum_{i =1}^n |x_i|^2 \,dt, \;\;  \int_0^{nT} |\dot{z}|^2 \,dt = \int_0^{T} \sum_{i=1}^n |\xd_i|^2 \,dt. $$
Meanwhile by condition (i) in Definition \ref{dfn;Gamma}, there is a $t_0 \in [0, nT]$ satisfying $z(t_0) =0$. Then 
$$ |z(t)| = |z(t) - z(t_0)| \le \int_0^{nT} |\dot{z}| \,dt \le (nT)^{\ey} \left( \int_0^{nT} |\dot{z}|^2 \,dt \right)^{\ey}
$$
As a result, $\int_0^{nT} |z|^2 \,dt \le  n^2 T^2 \int_0^{nT} |\dot{z}|^2 \,dt$. Let $\mu= \min\{ m_1, m_2, \dots, m_n \}$. Then
$$ \begin{aligned}
\|x|_{[0, T]}\|^2_{H^1} & =\|z|_{[0, nT]}\|^2_{H^1} \le (1 + n^2T^2) \int_{0}^{nT} |\dot{z}|^2 \,dt \le (1+n^2 T^2) \int_0^{T} \sum_{i=1}^n |\xd_i|^2 \,dt \\
& \le \frac{2(1 +n^2T^2)}{\mu} \int \ey \sum_{i=1}^n m_i |\xd_i|^2 \,dt \le \frac{2(1 +n^2T^2)}{\mu}  \A_T(x). 
\end{aligned} 
$$
\end{proof}

In the rest of the section, let $x \in \Gamma$ be a minimizer of $\A_T$ in $\Gamma$. We extend $x$ to a $2T$-periodic Sobolev loop by setting
\begin{equation}
\label{eq;symmetry} x(t) = x(2T -t), \; x(t) = x(t + 2T), \; \forall t \in \rr.
\end{equation}
Then $x$ satisfies property (a) in Theorem \ref{thm:MainThm},

\begin{lem}
\label{lem;monotone} For any $0 \le t_1 < t_2 \le T$, $x_i(t_1) < x_i(t_2)$, if $i$ is odd; $x_i(t_1) > x_i(t_2)$, if $i$ is even.
\end{lem}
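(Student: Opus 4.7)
The plan is a proof by contradiction against the minimality established in Lemma \ref{lem:existence}. Suppose some $x_i$ fails the claimed monotonicity. Using the symmetry between the odd/even cases, I may take $i$ odd, so there exist $0 \le t_1 < t_2 \le T$ with $x_i(t_1) \ge x_i(t_2)$; by continuity and the intermediate value theorem I may further arrange $x_i(t_1) = x_i(t_2) =: c$ with $x_i \not\equiv c$ on $[t_1, t_2]$, so that $x_i$ has a genuine loop on this interval.

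The natural candidate competitor $\tilde x \in \Gamma$ is obtained by flattening the loop: set $\tilde x_i \equiv c$ on $[t_1, t_2]$ and keep all other coordinates unchanged. This immediately removes the positive contribution $\int_{t_1}^{t_2} \tfrac12 m_i \xd_i^2\, dt$ from the kinetic energy, so the remaining issues are (a) to check that $\tilde x$ still lies in $\Gamma$, i.e.\ the ordering $\tilde x_{i-1} \le \tilde x_i \le \tilde x_{i+1}$ holds pointwise, and (b) to show that the change in $\int_{t_1}^{t_2} U\, dt$ does not wipe out the kinetic saving.

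For the extreme indices $i \in \{1, n\}$ this is conceptually clean, and one can even sidestep the rearrangement in favor of a direct Euler--Lagrange argument: at any interior, collision-free local maximum of $x_1$ one would have $\ddot x_1 \le 0$, contradicting the identity $m_1 \ddot x_1 = \sum_{j>1} m_1 m_j (x_j - x_1)/|x_j - x_1|^3 > 0$ that follows from \eqref{eq:n-body}. A parallel argument would handle loops touching the collision locus $x_1 = x_2$, by pushing the collision to $t = T$ at strictly smaller cost and invoking the boundary condition in Definition \ref{dfn;Gamma}.

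The main obstacle, and where most of the work will sit, is the case of an intermediate odd index $1 < i < n$. There the Euler--Lagrange right-hand side has both signs, so a local extremum of $x_i$ is not forbidden on sign grounds alone, and the flattening can violate admissibility (ii) by pushing $\tilde x_i$ across one of its neighbors. I would address this by combining the flattening of $x_i$ with a matching monotone modification of $x_{i-1}$ or $x_{i+1}$ chosen to preserve the ordering, and by localizing the loop to a sub-interval on which the relevant neighbor is strictly separated from $c$. If a local argument proves too delicate, a cleaner alternative is to work globally with the single-particle extended path $z \colon [0, nT] \to \rr$ of Lemma \ref{lem:existence}: the monotonicity of every $x_i$ is equivalent to the monotonicity of $z$, the kinetic part of $\A_T$ equals a weighted Dirichlet energy $\tfrac12 \int_0^{nT} M(t)\, \dot z^2\, dt$ for a piecewise constant effective mass $M(t)$, and a partition-respecting monotone rearrangement of $z$ strictly decreases this energy; the outstanding difficulty is to track the corresponding change in the potential term, which is non-local in $z$.
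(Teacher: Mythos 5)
Your overall strategy (contradict minimality with an explicit competitor in $\Gamma$) is the right one, but the competitor you build the plan around does not exist in the main case, and the points you flag as ``the main obstacle'' are exactly where the proof has to live. First, the reduction to a loop fails: if $x_{i_0}$ is strictly decreasing on some interval (possibly on all of $[0,T]$), there need not be any $t_1<t_2$ with $x_{i_0}(t_1)=x_{i_0}(t_2)$ and $x_{i_0}$ non-constant between them, so there is nothing to flatten. Second, even when a loop exists, flattening trades a kinetic saving against a potential change of indeterminate sign and can break the ordering constraint (ii); you name both problems but do not resolve them, and your fallback (monotone rearrangement of the concatenated path $z$) leaves the potential term untreated by your own admission. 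Third, the lemma asserts \emph{strict} monotonicity, so you must also exclude $x_{i_0}$ being constant on a subinterval; there a flattening competitor coincides with $x$ and yields no contradiction. Finally, the Euler--Lagrange argument for $x_1$ only forbids interior local maxima: a strictly convex $x_1$ can still be decreasing, so it does not give monotonicity even for the extreme index (and it presupposes interior regularity of the constrained minimizer, which the paper establishes only after this lemma).

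The paper's proof replaces the flattening by two different competitors, each with a definite sign. To rule out strict decrease of $x_{i_0}$ on $[t_1,t_2]$, it reflects: $x^*_{i_0}=2x_{i_0}(t_1)-x_{i_0}$ on $[t_1,t_2]$, with $x_{i_0}$ on $[t_2,T]$ and every particle of index greater than $i_0$ on all of $[0,T]$ translated rigidly to the right by $2(x_{i_0}(t_1)-x_{i_0}(t_2))>0$. This keeps the kinetic integral exactly equal (reflection and rigid translation preserve $|\xd|$), preserves the ordering and the boundary pairings of Definition \ref{dfn;Gamma}, and weakly increases every mutual distance while strictly increasing some, so $\int_0^T U\,dt$ strictly drops and no balancing of competing terms is needed. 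To rule out constancy of $x_{i_0}$ on $[t_1,t_2]$, it inserts a linear ramp of height $\ep$ there and translates everything above by $\ep$: the kinetic cost is $\frac{m_{i_0}}{2(t_2-t_1)}\ep^2$, while the gap $x_n-x_1$ grows by $\ep$ on an interval of positive length, giving a potential gain of order $\ep$, so small $\ep$ wins. The reflection step is the missing idea in your outline; with it, the rest of your plan becomes unnecessary.
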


\begin{proof}
We give details for $i$ being odd, while the other is similar and will be omitted. 

First we prove a weaker result: for any $0 \le t_1 < t_2 \le T$, $x_i(t_1) \le x_i(t_2)$,  if $i$ is odd. By a contradiction argument, assume there is an odd $i_0$ with $x_{i_0}(t)$ strictly decreasing, when $t \in [t_1, t_2]$, for some $0 \le t_1 < t_2 \le T$. Then we can define a new path $x^* \in \Gamma$ as: 
$$ x^*_{i_0}(t) = \begin{cases}
x_{i_0}(t) & \text{ when } t \in [0, t_1], \\
2x_{i_0}(t_1) - x_{i_0}(t) & \text{ when } t \in [t_1, t_2], \\
x_{i_0}(t) + 2 (x_{i_0}(t_1) - x_{i_0}(t_2)) & \text{ when } t \in [t_2, T]; 
\end{cases}
$$
$$ x^*_{i}(t) = \begin{cases}
x_i(t) & \text{ when } t \in [0, T], \; \text{ if } i < i_0, \\
x_i(t) + 2 (x_{i_0}(t_1) - x_{i_0}(t_2)) & \text{ when } t \in [0, T], \; \text{ if } i > i_0. 
\end{cases}
$$
Notice that $\A_T(x^*) < \A_T(x)$, since  
$$ \int_{0}^T \ey \sum_{i=1}^n m_i |\dot{x}^*_i|^2 \,dt = \int_0^T \ey \sum_{i =1}^n m_i |\xd_i|^2 \,dt, \;\; \int_0^T U(x^*) \,dt < \int_0^T U(x) \,dt. $$
However this is absurd and it proves the weaker result stated above.

Now let's  assume there is an odd $i_0 < n$ and $0 \le t_1 < t_2 \le T$, such that
$$ x_{i_0}(t) = x_{i_0}(t_1), \; \forall t \in [t_1, t_2].$$
Without loss of generality, we may even assume $t_1 >0$ (if not, just replace $t_1$ by $t_2/2$). Then for any $\ep>0$ small enough, we can define a new path $\hat{x} \in \Gamma$ as follows: 
$$ \hat{x}_{i_0}(t) = \begin{cases}
x_{i_0}(t) & \text{ when } t \in [0, t_1], \\
x_{i_0}(t_1) + \frac{t-t_1}{t_2-t_1}\ep & \text{ when } t \in [t_1, t_2], \\
x_{i_0}(t) + \ep  & \text{ when } t \in [t_2, T]; 
\end{cases} $$
$$ \hat{x}_{i}(t) = \begin{cases}
x_i(t) & \text{ when } t \in [0, T], \text{ if } i < i_0, \\
x_i(t)+ \ep & \text{ when } t \in [0, T], \text{ if } i > i_0. 
\end{cases} $$
As a result, 
\begin{equation}
\label{eq;xhat-x} |\hat{x}_i(t) - \hat{x}_j(t)| \ge |x_i(t) - x_j(t)|, \; \forall t \in [0, T], \; \forall i \ne j;
\end{equation}
\begin{equation}
 \hat{x}_{n}(t) - \hat{x}_1(t) = x_{n}(t) - x_1(t) + \ep, \;\; \forall t \in [0, t_1].
 \end{equation}
Meanwhile notice that  
$$ C_1 = \sup\{ x_{n}(t) - x_1(t): t \in [0, t_1]\}>0, $$
as otherwise $U(x(t)) = \infty$, $\forall t \in [0, t_1]$. This implies $\A_T(x) =\infty$, which is absurd. Then
\begin{equation*}
\label{eq;ActionU-diff} \begin{aligned}
\int_0^T U(\hat{x}) \,dt - \int_0^{T} U(x) \,dt &  \le \int_0^{t_1} \frac{m_{1}m_n}{\hat{x}_{n}(t) - \hat{x}_1(t)} - \frac{m_1 m_n}{x_{n}(t) - x_1(t)} \,dt \\
 & = - \int_{0}^{t_1} \frac{m_{1}m_n \ep}{(x_{n}(t) - x_1(t))^2 + \ep(x_{n}(t) - x_1(t))} \,dt \\
 & \le - \frac{m_{1}m_n t_1 \ep}{C_1^2 + C_1\ep}. 
\end{aligned}
\end{equation*}
At the same time a direct computation shows 
\begin{equation*}
\label{eq:ActionK-diff} \int_0^T K(\dot{\hat{x}}) \,dt - \int_0^T K(\xd)\,dt = \frac{m_{i_0}}{2} \int_{t_1}^{t_2} \dot{\hat{x}}_{i_0}^2 - \xd_{i_0}^2 \,dt = \frac{m_{i_0}}{2 (t_2-t_1)} \ep^2.
\end{equation*}
Then the following inequality holds, for $\ep \in (0, C_1)$ small enough, which is absurd. 
$$ \A_T(\hat{x})- \A_T(x) \le - \frac{m_{1}m_n t_1}{2 C_1^2} \ep + \frac{m_{i_0}}{2(t_2 -t_1)} \ep^2 <0.$$ 

Now the only case that is not covered by the above argument is when $i_0 =n$ with $n$ being odd. In this case, assume $t_2 < T$ (if not, just replace $t_2$ by $(t_1 +t_2)/2$) and like above, we have 
$$ C_2 = \sup \{ x_n(t) - x_1(t): t \in [t_2, T]\}>0.$$
Then the same $\hat{x}$ defined as above satisfies \eqref{eq;xhat-x} and 
$$ \hat{x}_n(t) - \hat{x}_1(t) = x_n(t) - x_1(t) + \ep, \;\; \forall t \in [t_1, t_2].$$
A contradiction now can be reached by a similar estimate as above. 
\end{proof}

\begin{prop}
\label{prop;CollFree-Solution}  $x(t)$, $t \in (0, T)$,  is a collision-free smooth solution of \eqref{eq:n-body}. 
\end{prop}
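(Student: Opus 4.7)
The plan is to establish the proposition in three steps. First, I would show that the minimizer $x$ is strictly ordered, hence collision-free, on the open interval $(0,T)$. Second, I would use this to perform unconstrained compactly-supported variations in the interior and deduce the weak Euler-Lagrange equations. Third, a standard bootstrap upgrades $H^1$ regularity to $C^\infty$ on the collision-free set.

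\emph{Strict ordering on $(0,T)$.} Combining Lemma \ref{lem;monotone} with the endpoint equalities in Definition \ref{dfn;Gamma}, every adjacent pair $(i,i+1)$ collides at exactly one of $t=0$ or $t=T$. If $x_i(0)=x_{i+1}(0)$, then $i$ is even and $i+1$ odd, so $x_i$ is strictly decreasing and $x_{i+1}$ strictly increasing on $[0,T]$, giving
$$ x_i(t) < x_i(0) = x_{i+1}(0) < x_{i+1}(t), \qquad t\in(0,T]. $$
If instead $x_i(T)=x_{i+1}(T)$, then $i$ is odd, $i+1$ even, and the symmetric argument yields strict separation for all $t\in[0,T)$. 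Conditions (iii) and (iv) of Definition \ref{dfn;Gamma} show every adjacent pair falls into one of these two cases, so $x_1(t) < x_2(t) < \cdots < x_n(t)$ for every $t\in(0,T)$, i.e. $x(t) \notin \Delta$.

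\emph{Local minimality and Euler-Lagrange.} Fix $t_0\in(0,T)$. By the previous step and continuity there exist $\dl>0$ and $\rho>0$ with $[t_0-\dl,t_0+\dl]\subset(0,T)$ and $x_{i+1}(t)-x_i(t)\ge 4\rho$ on this interval for every $i$. For any test $\xi\in C^\infty_c((t_0-\dl,t_0+\dl),\rr^n)$ and any $\ep$ with $\ep\|\xi\|_\infty < \rho$, the perturbation $x+\ep\xi$ still satisfies the ordering condition (ii) of Definition \ref{dfn;Gamma}, while conditions (i), (iii), (iv) are untouched because $\xi$ vanishes at $t=0$ and $t=T$. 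Thus $x+\ep\xi \in \Gm$, and the minimality of $x$ forces
$$ \left.\frac{d}{d\ep}\A_T(x+\ep\xi)\right|_{\ep=0} = 0 $$
for every such $\xi$, which is the weak form of \eqref{eq:n-body} on $(t_0-\dl,t_0+\dl)$. Since the pairwise distances are bounded below on this interval, $\nabla U(x)$ is bounded and smooth in $x$; the usual bootstrap then promotes $x$ from $H^1$ to $C^\infty$ there. As $t_0\in(0,T)$ was arbitrary, $x$ is a classical smooth solution of \eqref{eq:n-body} on all of $(0,T)$.

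\emph{Main obstacle.} There is essentially no difficulty beyond Lemma \ref{lem;monotone}, which already does the heavy lifting. The only point that requires genuine care is verifying that the compactly-supported interior variations remain inside $\Gm$; this relies crucially on the strict separation obtained in the first step, and on the observation that the boundary collision pattern defining $\Gm$ is controlled entirely by the values at $t=0$ and $t=T$, which such variations leave unchanged.
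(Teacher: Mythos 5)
Your proposal is correct and follows essentially the same route as the paper: Lemma \ref{lem;monotone} gives strict separation of adjacent bodies on $(0,T)$, and the resulting uniform lower bound on mutual distances shows that small compactly supported interior perturbations stay in $\Gamma$, so $x$ is a fixed-end local minimizer on small subintervals and hence a smooth solution of \eqref{eq:n-body} there. You merely spell out the details (the $4\rho$ separation, the first-variation computation, the elliptic bootstrap) that the paper leaves implicit.
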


By Lemma \ref{lem;monotone} $x(t)$ is collision-free, $\forall t\in (0, T)$. As a result, if we make small enough perturbations near $x(t)$, the new path still belongs to $\Gamma$. This means that $x|_{[t -\dl, t+\dl]}$ is a fix-end local minimizer of $\A$, for $\dl>0$ small enough, which implies the above proposition.



\begin{proof}[Proof of (b), (c) \& (d) of Theorem \ref{thm:MainThm}]

For property (b), let's assume $i$ is odd (the proof for even $i$ is similar). By a contradiction argument, assume there is an odd $i_0 <n$ and a $t_0 \in (0, T)$ with $\xd_{i_0}(t_0)=0$. By Lemma \ref{lem;monotone} and Proposition \ref{prop;CollFree-Solution}, there is a constant $C_1>0$, such that for any $\ep>0$ small enough,
\begin{equation}
\label{eq;x-dot-upper-bound} 0 \le \dot{x}_{i_0}(t) \le C_1(t-t_0), \; \forall t \in [t_0, t_0 +\ep]. 
\end{equation}
Now define a new path $\hat{x} \in H^1([0, T], \rr^n)$ as follows: 
$$ \hat{x}_{i_0}(t) = \begin{cases}
x_{i_0}(t) & \text{ when } t \in [0, t_0], \\
x_{i_0}(t) + (t-t_0)(2\ep-(t-t_0)) & \text{ when } t \in [t_0, t_0 +\ep], \\
x_{i_0}(t) + \ep^2 & \text{ when } t \in [t_0 + \ep, T];
\end{cases}
$$
$$ \hat{x}_i(t) = \begin{cases}
x_i(t) & \text{ when } t \in [0, T] \text{ and } i < i_0, \\
x_i(t) + \ep^2 & \text{ when } t \in [0, T] \text{ and } i > i_0. 
\end{cases} $$
As a result, 
$$ |\hat{x}_i(t) - \hat{x}_j(t)| \ge |x_i(t) - x_j(t)|, \; \forall t \in [0, T], \; \forall i \ne j; $$ 
$$ \hat{x}_{n}(t) - \hat{x}_1(t) = x_{n}(t) - x_1(t) + \ep^2, \;\; \forall t \in [0, t_0]. $$
Meanwhile as $C_2 = \sup \{ x_n(t) - x_1 (t): t \in [0, t_0] \}$ is a positive constant, we have
$$ \begin{aligned}
\int_0^T U(\hat{x}) \,dt - \int_0^{T} U(x) \,dt &  \le \int_0^{t_0} \frac{m_{1}m_n}{\hat{x}_{n}(t) - \hat{x}_1(t)} - \frac{m_1 m_n}{x_{n}(t) - x_1(t)} \,dt \\
 & \le - \int_{0}^{t_0} \frac{m_{1}m_n \ep^2}{(x_{n}(t) - x_1(t))^2 + \ep^2(x_{n}(t) - x_1(t))} \,dt \\
 & \le - \frac{m_{1}m_n t_0 \ep^2}{C_2^2 + C_2\ep^2}. 
\end{aligned}
$$
The definition of $\hat{x}$ and \eqref{eq;x-dot-upper-bound} then imply 
$$ \begin{aligned}
\int_0^T K(\dot{\hat{x}}) - K(\xd) \,dt & = \frac{m_{i_0}}{2} \int_{t_0}^{t_0 +\ep} \dot{\hat{x}}^2_{i_0} - \xd^2_{i_0} \,dt \\
& = 2 m_{i_0} \int_{t_0}^{t_0 +\ep} \dot{x}_{i_0}(t) (\ep+t_0 -t) + (\ep + t_0 -t)^2 \,dt  \\
& \le \int_{t_0}^{t_0 +\ep} C_1(t-t_0)(\ep +t_0 -t) + (\ep + t_0 -t)^2 \,dt  \\
& \le \frac{(2 + C_1) m_{i_0}}{3}\ep^3. 
\end{aligned}
$$
By the above estimates, for $  \ep \in (0, \sqrt{C_2})$ small enough, we have the following, which is absurd.  
$$ \A_T(\hat{x}) - \A_T(x) \le -\frac{m_1 m_n t_0}{2 C_2^2}\ep^2 +  \frac{(2 + C_1) m_{i_0}}{3}\ep^3 <0.$$ 

What's left now is when $i_0 =n$ (with $n$ being odd) and $\dot{x}_{i_0}(t_0) = 0$, for some $t_0 \in (0, T)$. In this case, a contradiction can be reached by a similar argument as above, as
$$ \hat{x}_n(t) - \hat{x}_1(t) = x_n(t) - x_1(t) +\ep^2, \; \forall t \in [t_0, T], $$
and $C_3 = \sup \{ x_n(t) -x_1(t): t \in [t_0, T] \}$ is a positive constant $C_3$. 

For property (c) and (d), as $x \in \Gamma$, it must satisfy Condition (iii) and (iv) in Definition \ref{dfn;Gamma}. Then Lemma \ref{lem;monotone} implies the inequalities in these conditions must be strict.
\end{proof}

Recall that the velocity of the center of mass of a smooth solutions must be a constant. 
\begin{lem}
\label{prop:center-of-mass} $v_0  = \frac{\sum_{i \in \bn} m_i \xd_i(t)}{m_0}=0, \forall t \in (0, T)$, where $m_0 = \sum_{i \in \bn} m_i$. 
\end{lem}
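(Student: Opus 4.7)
The first assertion of the lemma --- that $v_0(t)$ is constant on $(0,T)$ --- is the standard consequence of conservation of linear momentum for a smooth solution of \eqref{eq:n-body} on $(0,T)$; by Proposition \ref{prop;CollFree-Solution} this conservation law applies to our $x$. So I only need to show that this constant value vanishes, and I plan to obtain it from the minimality of $x$ against a one-parameter family of Galilean-boosted competitors.

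For each $c \in \rr$, define $y^{c} \in H^1([0,T], \rr^n)$ by $y^{c}_i(t) = x_i(t) + ct$ for all $i \in \bn$. The first step is to check that $y^{c} \in \Gamma$ for every $c$: conditions (ii), (iii), (iv) involve only pairwise differences $y^{c}_i(t) - y^{c}_j(t) = x_i(t) - x_j(t)$ and are therefore preserved, while condition (i) is preserved because $y^{c}(0) = x(0)$. Hence $c \mapsto \A_T(y^{c})$ is a scalar function on $\rr$ that is minimized at $c = 0$.

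Because $U$ is translation-invariant, $U(y^{c}) = U(x)$; expanding the kinetic term via $|\dot{y}^{c}_i|^2 = |\xd_i|^2 + 2c\,\xd_i + c^2$ gives
\begin{equation*}
\A_T(y^{c}) = \A_T(x) + c \int_0^T \sum_{i \in \bn} m_i \xd_i(t) \,dt + \tfrac{1}{2} c^2 m_0 T.
\end{equation*}
The fundamental theorem of calculus for $H^1$-paths rewrites the linear term as $c\sum_{i \in \bn} m_i (x_i(T) - x_i(0))$, and stationarity at $c = 0$ therefore forces
\begin{equation*}
\sum_{i \in \bn} m_i\bigl(x_i(T) - x_i(0)\bigr) = 0.
\end{equation*}
On the other hand, constancy of $v_0$ on $(0,T)$ together with $\xd \in L^2([0,T], \rr^n)$ ensures $\int_0^T \sum_{i} m_i \xd_i \,dt = m_0 v_0 T$. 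Comparing the two evaluations yields $m_0 v_0 T = 0$, i.e. $v_0 = 0$.

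I do not anticipate a real obstacle: the whole argument reduces to identifying the correct variation and a quadratic expansion. The only delicate point is verifying that the boost family $\{y^{c}\}_{c \in \rr}$ stays inside $\Gamma$, which works precisely because the sole non-translation-invariant constraint (i) in Definition \ref{dfn;Gamma} involves only $t = 0$, where the boost acts as the identity.
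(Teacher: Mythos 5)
Your proof is correct and is essentially the paper's argument: both exploit that the Galilean-boosted path $x_i(t)+ct$ stays in $\Gamma$ (since the only non-translation-invariant constraint acts at $t=0$) and that $U$ is unchanged while the kinetic term is quadratic in $c$. The paper simply substitutes the optimal boost $c=-v_0$ directly and exhibits the strict decrease $\A_T(x)-\A_T(\hat x)=\tfrac12 m_0v_0^2T>0$, whereas you extract the same conclusion from first-order stationarity of the one-parameter family; the two computations are interchangeable.
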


\begin{proof}
Notice the path $\hat{x}|_{[0, T]}$ with $\hat{x}_i(t) = x_i(t)- v_0 t$, $\forall i \in \bn$, still belongs to $\Gamma$, and $\forall t \in [0, t]$, 
$$ 2K(\xd) - 2 K(\dot{\hat{x}}) = \sum_{i \in \bn} m_i \big(\xd_i^2 - (\xd_i -v_0)^2 \big) = \sum_{i \in \bn} m_i\big( 2v_0 \xd_i - v_0^2 \big) = m_0 v_0^2.$$
As $U(\hat{x}(t)) = U(x(t))$, $v_0$ must be zero. Otherwise the following inequality holds, which is absurd.
$$\A_T(x) - \A_T(\hat{x}) = \int_0^{T} K(\xd) - K(\dot{\hat{x}}) \,dt = \ey m_0v_0^2T  >0.$$ 
\end{proof}

\begin{proof}[Proof of (e) of Theorem \ref{thm:MainThm}] By the definition of $\Gamma$, for any $\xi \in C^{1}([0, T], \rr^n) \cap \Gamma$, $x+ \ep \xi \in \Gamma$, when $\ep>0$ is small enough. In the following it is always assumed that any $\xi$ that we choose from $C^{1}([0, T], \rr^n) \cap \Gamma$ satisfying
\begin{equation}
\label{eq;action-xi-finite} \A_T(x +\ep \xi) <\infty, \; \text{ when } \ep>0 \text{ is small enough}. 
\end{equation}
Then the minimizing property of $x(t)$, Sundman estimates and Lebesgue dominated convergence theorem imply 
$$ \frac{d}{d \ep} \Big|_{\ep =0} \A_T(x + \ep \xi) = \int_0^{T} \sum_{i \in \bn} (m_i \xd_i \dot{\xi}_i + \partial_{x_i}U(x)\xi_i ) \,dt =0. $$
By \eqref{eq:n-body} and integration by parts,  
\begin{equation}
\label{eq;xi-bdry-old}  \lim_{t \to T^-} \sum_{i \in \bn} m_i \xd_i(t) \xi_i(t) - \lim_{t \to 0^+} \sum_{i \in \bn} m_i \xd_i(t) \xi_i(t) =0. 
\end{equation}

It is known during a binary collision, the velocity of the center of mass of the pair involved in the collision has a finite limit, as they approaches the binary collision (see \cite{Sp70} or \cite[Section 4]{FT04}). This implies the following one-sided limits are well-defined and finite, 
$$ \eta^+_j = \lim_{t \to 0^+} m_{2j} \xd_{2j}(t) + m_{2j+1} \xd_{2j+1}(t), \; \forall 1 \le j \le [(n-1)/2];$$
$$ \eta^-_j = \lim_{t \to T^-} m_{2j-1} \xd_{2j-1}(t) + m_{2j} \xd_{2j}(t), \; \forall 1 \le j \le [n/2].$$
We shall use these to rewrite \eqref{eq;xi-bdry-old}. 

First let's consider the case that $n$ is odd. Notice that for any $j$,
$$ \begin{aligned}
 m_{2j} \dot{x}_{2j} \xi_{2j}  + m_{2j+1} \dot{x}_{2j+1} \xi_{2j+1} & =  (m_{2j} \dot{x}_{2j} + m_{2j+1} \dot{x}_{2j+1}) \frac{\xi_{2j} + \xi_{2j+1}}{2} \\ 
 &+ (m_{2j} \dot{x}_{2j} - m_{2j+1} \dot{x}_{2j+1}) \frac{\xi_{2j} - \xi_{2j+1}}{2}.
\end{aligned} $$
By Sundman's asymptotic estimate (see \cite{Sp70} or \cite[Section 4]{FT04})
$$ \lim_{t \to 0^+} \big(m_{2j} \dot{x}_{2j}(t) - m_{2j+1} \dot{x}_{2j+1}(t) \big) \big(\xi_{2j}(t) - \xi_{2j+1}(t) \big) = 0. $$
As a result,  
\begin{equation} \label{eq;lim-0-odd}
\lim_{t \to 0^+} \sum_{i \in \bn} m_i \xd_i(t) \xi_i(t) =  
m_1 \dot{x}_1(0) \xi_1(0) + \sum_{j =1}^{[\frac{n-1}{2}]}  \eta_j^+ \frac{\xi_{2j}(0)+ \xi_{2j+1}(0)}{2}. 
\end{equation}
A similar result can be obtained, as $t$ goes to $T^-$,
\begin{equation} \label{eq;lim-T-odd}
\lim_{t \to T^-} \sum_{i \in \bn} m_i \xd_i(t) \xi_i(t) =  
  \sum_{j =1}^{[\frac{n}{2}]}  \eta_j^- \frac{\xi_{2j-1}(T)+ \xi_{2j}(T)}{2} + m_n \dot{x}_n(T) \xi_n(T).
\end{equation}
With \eqref{eq;lim-0-odd} and \eqref{eq;lim-T-odd}, when $n$ is odd, we can rewrite \eqref{eq;xi-bdry-old} as
\begin{equation}
\label{eq;lim-0-T-odd} \begin{aligned}
0 & = \left( m_1 \dot{x}_1(0) \xi_1(0) + \sum_{j =1}^{[n-1/2]}  \eta_j^+ \frac{\xi_{2j}(0)+ \xi_{2j+1}(0)}{2} \right) \\ 
& - \left( \sum_{j =1}^{[n/2]}  \eta_j^- \frac{\xi_{2j-1}(T)+ \xi_{2j}(T)}{2} + m_n \dot{x}_n(T) \xi_n(T) \right).
\end{aligned}
\end{equation}
When $n$ is even, by a similar argument, we can rewrite \eqref{eq;xi-bdry-old} as
\begin{equation}
\label{eq;lim-0-T-even} \begin{aligned}
0 & = \left( m_1 \dot{x}_1(0) \xi_1(0) + \sum_{j =1}^{[n-1/2]}  \eta_j^+ \frac{\xi_{2j}(0)+ \xi_{2j+1}(0)}{2} + m_n \dot{x}_n(0) \xi_n(0) \right) \\ 
& - \left( \sum_{j =1}^{[n/2]}  \eta_j^- \frac{\xi_{2j-1}(T)+ \xi_{2j}(T)}{2}  \right)
\end{aligned}
\end{equation}

Four different cases shall be considered below, as we need to use different $\xi$ from $C^{1}([0, T], \rr^n) \cap \Gamma$. Meanwhile we will always assume $\xi$ satisfies
\begin{equation}
\label{eq; xi=0} \sum_{i \in \bn} m_i \xi_i(t) =0, \;\; \forall t \in [0, T].
\end{equation} 
Recall $v_0=\sum_{i \in \bn} m_i \xd_i(t)= 0$ (see Lemma \ref{prop:center-of-mass}), as it will also be needed.

\emph{Case 1:  $n$ is odd and $t=0$.} First let's choose a $\xi$ satisfying
$$ \xi_i(0) = \xi_2(0) \ne 0, \forall i \in \bn \setminus \{1\}; \;  \xi_i(T) =0, \forall i \in \bn.$$ 
By \eqref{eq; xi=0}, $m_1\xi_1(0) = -(m_0 - m_1) \xi_2(0)$. Then \eqref{eq;lim-0-T-odd} implies 
$$ \begin{aligned}
m_1 \xd_1(0) \xi_1(0) &  + \xi_2(0) \sum_{j =1}^{[n-1/2]} \eta_j^+ =0  \stackrel{v_0 =0}{\Longrightarrow} m_1 \xd_1(0) \big( \xi_1(0) - \xi_2(0) \big) =0 \\
&  \Longrightarrow -m_0 \xi_2(0) \xd_1(0) =0 \stackrel{\xi_2(0) \ne 0}{\Longrightarrow} \xd_1(0) =0.
\end{aligned}
$$

Fix an arbitrary integer $j$ with $1 \le j  \le [n/2]$. Then choose a $\xi$ satisfying
$$ \xi_{2j }(0) = \xi_{2j  +1}(0) \ne 0, \; \xi_{i}(0) = 0, \; \forall i \in \bn \setminus \{1, 2j , 2j  +1\}; \; \xi_i(T) =0, \forall i \in \bn. $$
Then \eqref{eq;lim-0-T-odd} and $\dot{x}_1(0)=0$ imply
$$  
   \eta_j^+ \xi_{2j}(0) = 0  \stackrel{\xi_{2j}(0) \ne 0}{\Longrightarrow} \eta_j^+ =0 
$$ 

\emph{Case 2: $n$ is odd and $t=T$.} First choose a $\xi$ satisfying
$$ \xi_i(0) = 0,  \forall i \in \bn; \; \xi_i(T) = \xi_1(T) \ne 0,  \forall i \in \bn \setminus \{n \}. $$ 
By \eqref{eq; xi=0}, $m_n \xi_n(T) = - (m_0 - m_n) \xi_1(T)$. Then \eqref{eq;lim-0-T-odd} implies
$$ \begin{aligned}
 \xi_1(T) \sum_{j=1}^{[n/2]} \eta_j^- & + m_n \xd_n(T) \xi_n(T) =0  \stackrel{v_0 =0}{\Longrightarrow} m_n \xd_n(T) ( \xi_n(T) - \xi_1(T)) =0 \\ 
 & \Longrightarrow - m_0 \xi_1(T) \xd_n(T) =0   \stackrel{\xi_1(T) \ne 0}{\Longrightarrow} \xd_n(T) =0.
\end{aligned}
$$

Fix an arbitrary integer $j$ with $1 \le j \le [n/2]$. Then choose a $\xi \in \Gamma$ with
$$ \xi_i(0) = 0, \forall i \in \bn; \;  \xi_{2j -1}(T) = \xi_{2j}(T) \ne 0,  \xi_i(T) =0,   \forall i \in \bn \setminus \{2j -1, 2j, n \}.$$
Then \eqref{eq;lim-0-T-odd} and  and $\dot{x}_n(T)=0$ imply
$$ \eta_j^- \xi_1(T) =0 \stackrel{\xi_{2j}(T) \ne 0}{\Longrightarrow} \eta_j^- =0. 
$$

\emph{Case 3: $n$ is even and $t=0$.} First choose a $\xi$ satisfying
$$ \xi_1(0) \ne 0, \xi_i(0) = 0, \forall i\in \bn \setminus \{1, n\}; \; \xi_i(T) =0, \forall i \in \bn. $$ 
By \eqref{eq; xi=0}, $m_n \xi_n(0) = - m_1 \xi_1(0)$. Then \eqref{eq;lim-0-T-even} implies
$$  \begin{aligned}
m_1 \xd_1(0) \xi_1(0) + m_n \xd_n(0) \xi_n(0) = 0  & \Longrightarrow m_1 \xi_1(0) \Big( \xd_1(0) - \xd_n(0) \Big) =0  \\ 
& \stackrel{\xi_1(0) \ne 0 }{\Longrightarrow} \xd_1(0) = \xd_n(0).
\end{aligned}
$$ 
Next choose a $\xi$ satisfying
$$\xi_i(0) = \xi_2(0) \ne 0, \forall i \in \bn \setminus \{1, n\}; \; \xi_i(T) =0, \forall i \in \bn. $$
Then \eqref{eq;lim-0-T-even} and $v_0 =0$ imply
$$ \begin{aligned}
m_1 & \xd_1(0) \xi_1(0)  + m_n \xd_n(0) \xi_n(0) - \Big(m_1 \xd_1(0) + m_n \xd_n(0) \Big) \xi_2(0)=0 \\
& \stackrel{\xd_1(0) = \xd_n(0)}{\Longrightarrow} \Big( m_1 \xi_1(0) + m_n \xi_n(0) \Big) \xd_1(0) - (m_1 + m_n) \xi_2(0) \xd_1(0)= 0 \\
& \stackrel{\eqref{eq; xi=0}}\Longrightarrow - m_0\xi_2(0) \xd_1(0)  =0 \stackrel{\xi_2(0) \ne 0}{\Longrightarrow} \xd_1(0) =  \xd_n(0)=0. 
\end{aligned}
$$

Fix an arbitrary integer $j$ with $1 \le j \le \frac{n}{2}-1$. Then choose a $\xi$ with
$$ \xi_{2j}(0) = \xi_{2j+1}(0) \ne 0, \xi_i(0) = 0, \forall i \in \bn \setminus \{1, 2j, 2j+1, n\}; \; \xi_i(T) =0, \forall i \in \bn. 
$$
Using \eqref{eq;lim-0-T-even}, \eqref{eq; xi=0} and $\xd_1(0) =\xd_n(0)=0$, we get
$$ \eta_j^- \xi_{2j}(0) = 0 \stackrel{\xi_{2j}(0) \ne 0}{\Longrightarrow} \eta_j^- =0.$$

\emph{Case 4: $n$ is even and $t=T$.} Fix an arbitrary integer $j$ with $1 \le j \le n/2$ and choose a  $\xi$ satisfying
$$ \xi_i(0) =0,  \forall i \in \bn; \; \xi_{2j -1}(T) = \xi_{2j}(T), \xi_i(T) = \xi_0 \ne 0,  \forall i \in \bn \setminus \{2j-1, 2j\}. $$
By \eqref{eq; xi=0}, 
$$ (m_{2j-1} + m_{2j}) \xi_{2j}(T)= -\xi_0 \sum_{i \in \bn \setminus \{2j-1, 2j \}} m_i.$$  
Combing this with \eqref{eq;lim-0-T-even} and $v_0 =0$, we get
$$  \eta_j^-( \xi_{2j}(T) - \xi_0) = 0  \stackrel{\eqref{eq; xi=0}}\Longrightarrow -\frac{m_0\xi_0}{m_{2j-1} + m_{2j}} \eta_j^-  =0  \stackrel{\xi_0 \ne 0}{\Longrightarrow} \eta_j^- =0.
$$

The rest of property (e) now follows from that fact that $x(t)$ satisfies 
$$ x(t)=x(t+2T)=x(2T -t), \;\forall t \in \rr.$$


\end{proof} 

Let $M=\text{diag}(m_1, \dots, m_1, \dots, m_n, \dots, m_n) \in \rr^{nd \times nd}$ be the mass matrix. Then \eqref{eq:n-body} is equivalent to the following vector field, whose corresponding flow will be denoted as $\phi_t$. 
\begin{equation}
\label{eq;n-body-Hamilton} \dot{y} = v; \;\; \dot{v} = M^{-1}\nabla U(y).  
\end{equation}


\begin{dfn} \label{dfn;sing-block-reg} 
We say a collision singularity $y^* \in \Delta$ is  $C^k$ block-regularizable, if for any collision solution $y^-|_{[t_0-\dl, t_0]}$ ($y(t) \notin \Delta$, $\forall t \ne t_0$) with $y^-(t_0) = y^*$, one can associate a unique ejection solution, $y^+|_{[t_0, t_0 +\dl]}$ ($y(t) \notin \Delta$, $\forall t \ne t_0$) with $y^+(t_0) = y^*$, such that there are two co-dimension one cross sections $\Sigma^{\pm}$ (with $(y^{\pm}, \dot{y}^{\pm})(t_0 \pm \dl) \in \Sigma^{\pm}$ correspondingly) transversal to the vector field \eqref{eq;n-body-Hamilton}, and a $C^k$-homeomorphism $\psi: \Sigma^- \to \Sigma^+$ obtained as an extension of the Poincar\'e map from $\Sigma_-$ to $\Sigma_+$ induced from the flow $\phi_t$ by sending $(y^-, \dot{y}^-)(t_0 -\dl)$ to $(y^+, \dot{y}^+)(t_0 +\dl)$.  
\end{dfn}

\begin{prop} {\cite[Theorem A]{EB93}, \cite[Corollary G]{EB96}} 
A simultaneous binary collision singularity is $C^1$ block-regularizable, when $d=1$ and $C^0$ block-regularizable, when $d =2$ and $3$. 
\end{prop}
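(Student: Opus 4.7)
The plan is to regularize a simultaneous binary collision $y^*$ by applying a Levi-Civita-type transformation independently at each colliding pair and verifying that, after a suitable time reparametrization, the resulting vector field extends smoothly across the collision set. First I would decompose the configuration near $y^*$ using Jacobi-type coordinates adapted to the cluster decomposition: for each colliding pair $\{i_k, j_k\}$ introduce the relative vector $q_k = y_{i_k} - y_{j_k}$ and the center of mass of the pair, retaining the positions of the non-colliding bodies as independent coordinates. In these coordinates the potential splits as $U = \sum_k m_{i_k} m_{j_k}/|q_k| + R$, where $R$ is smooth on a neighborhood of $y^*$, so the singular part of the vector field is diagonal in the pair index.

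Next I would apply a Levi-Civita-type transformation to each pair: the scalar square $q_k = \pm u_k^2$ when $d=1$, the complex square when $d=2$, and the Kustaanheimo--Stiefel map when $d=3$. Simultaneously I would introduce a Sundman-type time change $dt = f(q)\,ds$ in which $f$ vanishes to the correct order at the collision. The core step is to verify that in the new variables the equations of motion extend as a smooth vector field across $\{u_k=0\}$ for every $k$; this works because, after the time change, each colliding pair decouples into a harmonic oscillator at leading order, and the smooth remainder $R$ is multiplied by a factor that vanishes at collision, so the cross-coupling terms remain regular.

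With the regularized flow in hand, the block map $\psi:\Sigma^-\to\Sigma^+$ would be defined by transporting the incoming state to the cross-section in regularized coordinates, flowing through the regularized collision time, and then undoing the Levi-Civita change on the ejection side. The dichotomy between $C^1$ when $d=1$ and only $C^0$ when $d\ge 2$ emerges at this last step: in the collinear case each involution $u_k\mapsto -u_k$ is scalar and the physical continuation fixes a unique sign along each pair, preserving one full derivative; in dimensions $2$ and $3$ the involution acts by a non-trivial group action on the regularized fibers, so pulling back to physical coordinates loses a derivative at the collision.

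The hard part of executing this plan in detail is controlling the interactions between the different colliding pairs, and between the colliding pairs and the outside bodies, during the blow-up. One needs a uniform energy estimate showing that the lengths $|q_k(t)|$ all vanish at comparable rates so that the product time reparametrization is well-behaved, and one needs to check that no mixed higher-order singularities arise in the regularized vector field when two or more $u_k$ are simultaneously small. These estimates constitute the technical core of \cite[Theorem A]{EB93} and \cite[Corollary G]{EB96}, which I would invoke to upgrade the smooth extension of the regularized flow to the stated regularity of $\psi$.
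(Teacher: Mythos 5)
The paper does not actually prove this proposition; it is imported verbatim from ElBialy's work (\cite[Theorem A]{EB93}, \cite[Corollary G]{EB96}), so the only thing to compare your sketch against is what those references really do. Your sketch has a genuine gap at its central step. For a \emph{simultaneous} binary collision involving two or more pairs, the Levi-Civita-plus-Sundman scheme does not produce a smooth vector field across $\{u_k=0\}$: a single time reparametrization $dt=f(q)\,ds$ cannot simultaneously cancel the $1/|q_k|$ singularities of all the pairs, because along nearby non-collision orbits the quantities $|q_k|$ need not vanish at the same instant or at identical rates, and whatever $f$ one chooses, the equation for at least one pair retains a non-smooth coefficient in the region where several $u_k$ are small. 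Indeed, if your core step went through, the block map $\psi$ would be $C^\infty$ (even analytic), which contradicts the sharpness result of Martinez and Sim\'o quoted in the paper's own remark: for $n=4$, $d=1$ the block regularization is exactly $C^{8/3}$ and no better. You flag ``no mixed higher-order singularities arise when two or more $u_k$ are simultaneously small'' as a check to be performed; that check fails, and its failure is precisely why the conclusion is only $C^1$ or $C^0$ rather than smooth.

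The actual arguments in \cite{EB93} and \cite{EB96} are of a different nature: a McGehee-type blow-up of the simultaneous-binary-collision singularity, an analysis of the flow on and near the resulting collision manifold, and a passage-near-invariant-manifold estimate in which the differentiability class of $\psi$ is dictated by ratios of eigenvalues of the linearized flow at the relevant rest points. This also shows that your proposed explanation of the $C^1$ versus $C^0$ dichotomy (the scalar involution $u_k\mapsto -u_k$ versus the nontrivial fiber action of the Kustaanheimo--Stiefel cover) cannot be the right mechanism: a single binary collision is analytically block-regularizable in every dimension $d\le 3$ via exactly those covers, so the loss of derivatives must come from the interaction of several colliding pairs near the collision manifold, not from the branched covering itself.
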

\begin{rem}
Block regularization was introduced by Easton \cite{Es71}. When $n=4$, the simultaneous binary collisions are $C^{\frac{8}{3}}$ block-regularizable, when $d =1$ see \cite{MS00} and when $d =2$ see \cite{DD21}. 
\end{rem}


\begin{dfn}
\label{dfn;solution-blok-reg} Given a collision solution $y \in H^1(\rr, \rr^{dn})$ of \eqref{eq:n-body} with the set of collision instants $\{t \in \rr: y(t) \in \Delta \}$ isolated in $\rr$,  we say it is \textbf{a $C^k$ block-regularizable solution} of \eqref{eq:n-body},  if for any collision instant $t_0$, there is a $\dl>0$ small enough, such that $y|_{[t_0, t_0 +\dl]}$ is the unique ejection solution associated with the collision solution $y|_{[t_0 -\dl, t_0]}$ given in Definition \ref{dfn;sing-block-reg} . 
\end{dfn}


 
\begin{proof}[Proof of (f) of Theorem \ref{thm:MainThm}] Since $x(t)$ is $2T$ periodic, it is enough to show $x|_{[t_0 -\dl, t_0+\dl]}$ is block-regularizable, when $t_0 =0$ and $T$. As the argument is almost identical, we will only give the details for $t_0 =0$. 

Since at $t_0=0$, $m_{2j}$ and $m_{2j+1}$ forming the $j$-th binary collision, let  
$$ \xi_j(t) = x_{2j+1}(t) - x_{2j}(t), \; s_j(t) = \xi_j(t)/|\xi_j(t)|, \;\; \forall j = 1, 2, \dots, [(n-1)/2].$$ 
The intrinsic energy of the $j$-th pair then is 
$$ \al_j(t) = \ey \frac{m_{2j}m_{2j+1}}{m_{2j} + m_{2j+1}} |\dot{\xi}_j(t)|^2- \sqrt{ \frac{m_{2j}m_{2j+1}}{m_{2j}+ m_{2j+1}}} \frac{m_{2j}m_{2j+1}}{|\xi_j(t)|}. $$
Despite of the binary collisions at $t=0$, the following limits exist and are finite (see \cite{EB96} or \cite{FT04}), 
\begin{equation*}
 \lim_{t \to 0^{\pm}} s_j(t) = s^{\pm}_j, \; \lim_{t \to 0^{\pm}} \al_j(t) = \al_j^{\pm}.
\end{equation*}
By results from \cite{EB93} and \cite{EB96}, $x|_{[-\dl, \dl]}$ is $C^1$ block-regularizable as a solution of the collinear $n$-body problem, and $C^0$ block-regularizable as a solution of the planar or spatial $n$-body problem, when the following identities hold 
$$ s^-_j = s^+_j \; \text{ and } \; \al_j^- = \al_j^+, \;\; \forall j = 1, 2, \dots, [(n-1)/2]. $$
The first identities follow from the fact that $x(t)$ is collinear with ordering of the mass preserving all the time. Meanwhile by \eqref{eq;symmetry}, $x_i(t) = x_i(-t)$, $\xd_i(t) = -\xd_i(-t)$, $\forall t \in (0, \dl)$ and $\forall i \in \bn$. Hence    
$$ |\xi_j(t)| = |\xi_j(-t)|, \; \dot{\xi}_j(t) = -\dot{\xi}_j(-t), \;\; \forall j =1, 2, \dots, [(n-1)/2].$$
This then implies $\al^-_j = \al^+_j$ for all corresponding $j$'s.  
\end{proof}

\section{Proof of Theorem \ref{thm:extra-symmetry}} \label{sec;extra-sym}
We will only explain it for $\sigma = \text{Identity}$. Notice that in this case, \eqref{eq;mass-condition} becomes 
\begin{equation}
\label{eq;mass-sg-id} m_i = m_{n+1-i}, \;\; \forall i \in \bn. 
\end{equation}

The idea is to impose certain symmetric constraints. Let's recall some basic notations and results following \cite{FT04}. Let $\Lmd= H^1(\rr / 2T, \rr^{nd})$ be the space of $2T$-periodic Sobolev loops. For any finite group $G$,  define its action on $\Lmd$ as follows:
$$ g\big(y(t)\big)= \big(\uprho(g)y_{\usg (g^{-1})(1)}, \uprho(g)y_{\usg (g^{-1})(2)}, \dots, \uprho(g) y_{\usg (g^{-1})(n)}\big) \big(\uptau(g^{-1})t \big), \;\; \forall g \in G, $$
where
\begin{enumerate}
 \item[(a).] $\uprho: G \to O(d)$ represents the action of $G$ on $\rr^d$;
 \item[(b).] $\upsigma: G \to \mathcal{S}_{\bn}$ represents the action of $G$ on $\bn$;
 \item[(c).] $\uptau: G \to O(2)$ represents the action of $G$ on the time circle $\rr/ 2 T \zz$.
\end{enumerate}
Then $\Lmd^G = \{ y \in \Lmd|\; g(y(t)) = y(t), \; \forall g \in G \}$ is the space of \emph{$G$-equivariant loops}. By Palais' symmetric principle, a critical point of $\A$ in $\Lmd^{G}$ is also a critical point of $\A$ in $\Lmd$, when   
\begin{equation}
\label{eq;mass-equal} m_i = m_j, \; \text{ if } \exists g \in G, \text{ such that } \upsigma(g)(i) = j 
\end{equation}

For us, the dihedral group $D_2 : = \langle f, h |\; f^2= h^2 = (fh)^2 =1 \rangle$ with following action will be used
$$  \uprho(f) =\text{Id}, \quad \usg(f) = \text{Id}, \quad \uptau(f) t  =2T- t;$$
$$ \uprho(h) = -\text{Id}, \quad \usg(h) = \prod_{i=1}^{n} (i, n+1-i), \quad \uptau(h) t = \begin{cases}
T -t, & \text{ when } n \text{ is odd}, \\
t, & \text{ when } n \text{ is even}.
\end{cases}$$
For such a group action, \eqref{eq;mass-sg-id} implies \eqref{eq;mass-equal} and each $x \in \Lmd^{D_2}$ satisfies  
\begin{equation*}
\label{eq;action-h-2}  x_i(t) = x_i(2T-t) \; \text{ and } \;   x_i(t) = \begin{cases}
- x_{n+1 -i}(T-t), \; & \text{ when } n \text{ is odd}, \\
- x_{n+1 -i}(t), \; & \text{ when } n \text{ is even}, 
\end{cases} .
\end{equation*}

Let $\Omega=\{ x|_{[0, T]}: x \in \Lmd^{D_2} \}$. Theorem \ref{thm:extra-symmetry} now can be proven by similar arguments as in the proof of Theorem \ref{thm:MainThm} with the desired solutions found as minimizers of $\A$ in $\Omega \cap \Gamma$. 


\hfill\newline
\noindent{\bf Acknowledgement.} 
The author thanks Kuo-Chang Chen and Richard Montgomery for their interests in this paper, and Alain Chenciner for help in reference. He also thanks the anonymous referee for a careful reading of the paper and useful suggestions that helped improving the paper, in particular those on the proof of Property (e) of Theorem \ref{thm:MainThm}.

\bibliographystyle{abbrv}
\bibliography{ref-periodic}

\end{document}